\documentclass{article}

\usepackage{amsmath,amscd,amsfonts,amssymb,amsthm}
\usepackage{graphicx}
\usepackage{epstopdf}
\usepackage{a4wide}
\usepackage{graphicx}
\usepackage{subfigure}
\usepackage[latin1]{inputenc}
\usepackage{color}
\usepackage{cite}


\newcommand{\subj}[1]{\par\noindent{\bf Mathematics Subject Classification 2010: }#1.}
\newcommand{\keyw}[1]{\par\noindent{\bf Keywords: }#1.}


\theoremstyle{definition}

\newtheorem{theorem}{Theorem}
\newtheorem{example}{Example}
\theoremstyle{remark}
\newtheorem{remark}{Remark}


\def\a{\alpha}
\def\t{\triangle T}
\def\DS{\displaystyle}
\def\LI{{_aI_t^\a}}
\def\RI{{_tI_b^{\a}}}
\def\LRD{{_aD_t^\a}}
\def\RRD{{_tD_b^{\a}}}
\def\LCD{{^C_aD_t^\a}}
\def\RCD{{^C_tD_b^\a}}
\def\a{\alpha}
\def\t{\tau}
\def\C{\DS\left(^{n+m-\a}_{\quad k}\right)}
\def\DS{\displaystyle}

\begin{document}

\title{A numerical method to solve higher-order fractional differential equations}

\author{Ricardo Almeida$^1$\\
{\tt ricardo.almeida@ua.pt}
\and
 Nuno R. O. Bastos$^{1,2}$\\
{\tt nbastos@estv.ipv.pt}}

\date{$^1$Center for Research and Development in Mathematics and Applications (CIDMA)\\
Department of Mathematics, University of Aveiro, 3810--193 Aveiro, Portugal\\
$^2$Department of Mathematics, School of Technology and Management of Viseu \\
Polytechnic Institute of Viseu\\
3504--510 Viseu, Portugal}

\maketitle


\begin{abstract}

In this paper, we present a new numerical method to solve fractional differential equations. Given a fractional derivative of arbitrary real  order, we present an approximation formula for the fractional operator that involves integer-order derivatives only. With this, we can rewrite FDEs in terms of a classical one and then apply any known technique. With some examples, we show the accuracy of the method.

\end{abstract}

\subj{26A33, 49M25, 49M25}

\keyw{fractional calculus, fractional differential equations, approximation formulas, numerical methods}


\section{Introduction}

Since the beginning of differential calculus, the question of what could be a derivative of non-integer order was pertinent and Leibniz himself wondered about the derivative of order $\a=1/2$. Liouville carried out a serious investigation on the subject, and presented a notion of fractional integrator operator. Later, starting with Cauchy's formula for an $n$-fold integral, Riemann defined a fractional integration as it is known today, and become a basis for the fractional calculus theory. A different type of fractional operators have appeared in 1967 \cite{Caputo}, due to Michel Caputo, and as proven to be applicable in many situations. It reveals two advantages: the derivative of a constant is zero and, when solving fractional differential equations involving this operator, it is not necessary to define fractional order initial conditions and we may consider ordinary ones.
In the last years, FDEs have revealed to model better some real phenomena, since these fractional operators contain memory and from experimental data, some dynamics of trajectories are modeled by non-integer order derivatives. Because of this, they have found numerous applications in various research areas and engineering applications (viscoelasticity, viscoplasticity, modeling polymers, transmission of ultrasound waves, etc) \cite{Alikhanov,Bhalekar,Chen,Das,Diethelm,Ezzat,Johnny,Leung,Nerantzaki,Mirjana,Zuomao}. However, there is no effective and easy-to-use method to solve such differential equations. For this reason, we find in the literature a vast number of numerical methods in order to be able to solve them \cite{Butera,Galeone,Gulsu,Jafari,Li,Mueller,Pedas}.

We begin in Section \ref{sec:FC} with a short introduction to fractional calculus, as presented in e.g. \cite{hilfer,Kilbas,Oldham,Podlubny}. In Section
\ref{Sec:theory}, we present and prove the main result of the paper: under some smoothness assumptions, we can approximate a fractional derivative of arbitrary real order by a sum that involves integer-order derivatives only. An estimation for the error is also given. With this, we extend the main results of \cite{Atan0,Atan2,almeida1}, by considering fractional derivatives of arbitrary real order. In Section \ref{Sec:examples}, we present some examples; first we test the efficiency of the method, by comparing the exact expression of the Caputo fractional derivative of a given function with some numerical approximations. At the end, we exemplify how it can be useful to solve fractional differential equations.

\section{Preliminaries}\label{sec:FC}

Let us review some necessary definitions on fractional calculus. Let $x:[a,b]\rightarrow\mathbb{R}$ be a function, $\alpha$ a positive non-integer number and $n\in\mathbb N$ be such that $\a\in(n-1,n)$.
In what follows, we assume that $x$ is sufficiently good in order to the fractional operators be well defined.
The left and right Riemann--Liouville fractional integrals of order $\a$ is a generalization of the Cauchy's formula to arbitrary real numbers, and are defined as
$$\LI x(t) =\frac{1}{\Gamma(\alpha)}\int_a^t (t-\tau)^{\alpha-1}x(\tau) d\tau,$$
and
$$\RI x(t)=\frac{1}{\Gamma(\alpha)}\int_t^b (\tau-t)^{\alpha-1}x(\tau) d\tau,$$
respectively. For fractional derivatives, we consider two types of operators.
The left and right Riemann--Liouville fractional derivatives are given by
$$\LRD x(t) =\frac{1}{\Gamma(n-\alpha)}\frac{d^n}{dt^n}\int_a^t (t-\tau)^{n-\alpha-1}x(\tau) d\tau,$$
and
$$\RRD x(t)=\frac{(-1)^n}{\Gamma(n-\alpha)}\frac{d^n}{dt^n}\int_t^b (\tau-t)^{n-\alpha-1}x(\tau) d\tau,$$
respectively.
The left and right Caputo fractional derivatives are given by
$$\LCD x(t) =\frac{1}{\Gamma(n-\alpha)}\int_a^t (t-\tau)^{n-\alpha-1}x^{(n)}(\tau) d\tau,$$
and
$$\RCD x(t)=\frac{(-1)^n}{\Gamma(n-\alpha)}\int_t^b (\tau-t)^{n-\alpha-1}x^{(n)}(\tau) d\tau,$$
respectively. There exists a relation between these two fractional derivatives, to know:

\begin{equation}\label{relation1}\LCD x(t)=\LRD x(t)-\sum_{k=0}^{n-1}\frac{x^{(k)}(a)}{\Gamma(k-\alpha+1)}(t-a)^{k-\alpha},\end{equation}
and
\begin{equation}\label{relation2}\RCD x(t)=\RRD x(t)-\sum_{k=0}^{n-1}\frac{x^{(k)}(b)}{\Gamma(k-\alpha+1)}(b-t)^{k-\alpha}.\end{equation}
Therefore, if
$$x(a)=x'(a)=\ldots=x^{(n-1)}(a)=0 \, \Rightarrow \, \LCD x(t)=\LRD x(t),$$
and if
$$x(b)=x'(b)=\ldots=x^{(n-1)}(b)=0 \, \Rightarrow \, \RCD x(t)=\RRD x(t).$$

Although in this paper we deal with the Caputo fractional derivatives, using relations \eqref{relation1} and \eqref{relation2}, similar formulas can be deduced for the Riemann--Liouville fractional derivatives.

Immediate calculations lead to the following. For the power functions
$$x(t)=(t-a)^{\beta-1} \quad \mbox{and} \quad y(t)=(b-t)^{\beta-1},$$
with $\beta>n$, we have
$$\LCD x(t)=\frac{\Gamma(\beta)}{\Gamma(\beta-\a)}(t-a)^{\beta-\a-1},$$
and
$$\RCD y(t)=\frac{\Gamma(\beta)}{\Gamma(\beta-\a)}(b-t)^{\beta-\a-1}.$$
If $x\in C^n[a,b]$, then the Caputo fractional derivatives $\LCD x(t)$ and $\RCD x(t)$ exist and are continuous on $[a,b]$. Moreover,  ${^C_aD_t^\a}x(t)=0$ at $t=a$, and ${^C_tD_b^\a}x(t)=0$ at $t=b$. Also, the Caputo fractional differentiation and the Riemann--Liouville fractional integration can be seen as inverse operations of each other. In fact, if $x\in C[a,b]$, then
$$\LCD \, \LI x(t)=\RCD \, \RI x(t)=x(t),$$
and if $x\in C^n[a,b]$, then
$$\LI \, \LCD x(t)=x(t)-\sum_{k=0}^{n-1}\frac{x^{(k)}(a)}{k!}(t-a)^k,$$
and
$$\RI \, \RCD x(t)=x(t)-\sum_{k=0}^{n-1}\frac{(-1)^kx^{(k)}(b)}{k!}(b-t)^k.$$

\section{Theoretical results}\label{Sec:theory}

In the following, we use the extension of the binomial formula to real numbers:
$$\DS\left(\begin{array}{c}
\gamma\\
k
\end{array}\right)(-1)^k=\frac{\Gamma(k-\gamma)}{\Gamma(-\gamma)k!}.$$

\begin{theorem}\label{teo1} Let $m \in \mathbb N \cup\{0\}$, $\mathbb N \ni N \geq m+1$ and  $x:[a,b]\rightarrow\mathbb{R}$ be a function of class $C^{n+m+1}$. Define
\begin{equation*}
\begin{split}
A_k&=\DS\frac{1}{\Gamma(n+k+1-\a)}\left[1+\sum_{p=m-k+1}^N\frac{\Gamma(p+\a-n-m)}{\Gamma(\a-n-k)(p-m+k)!}\right],\quad \mbox{for} \, k \in \{0,1,\ldots,m\},\\
B_k&=\DS\frac{\Gamma(k+\a-n-m)}{\Gamma(n-\a)\Gamma(\a+1-n)(k-m-1)!},\quad \mbox{for} \, k \in \{m+1,m+2,\ldots,N\},\\
V_k(t)& \DS=\int_a^t (\t-a)^{k}x^{(n)}(\t)d\t,\quad \mbox{for} \, k \in \{0,1,\ldots,N-m-1\},\, t\in[a,b].
\end{split}
\end{equation*}
Then, the following holds:
$$\LCD x(t)=\sum_{k=0}^{m}A_k(t-a)^{n+k-\a} x^{(n+k)}(t)+\sum_{k=m+1}^N B_k(t-a)^{n+m-k-\a}V_{k-m-1}(t)+E_N(t),$$
with
$$|E_{N}(t)|\leq \max_{\tau\in[a,t]}|x^{(n+m+1)}(\tau)| (t-a)^{n+m+1-\a}
\frac{\exp((n+m-\a)^2+n+m-\a)}{\Gamma(n+m+1-\a)N^{n+m-\a}(n+m-\a)}.$$
\end{theorem}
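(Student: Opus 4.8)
The first thing I would do is read the Caputo derivative as a Riemann--Liouville fractional integral: since $x\in C^{n+m+1}\subset C^n$,
$$\LCD x(t)=\frac{1}{\Gamma(n-\alpha)}\int_a^t(t-\tau)^{n-\alpha-1}x^{(n)}(\tau)\,d\tau ,$$
which is the fractional integral of order $n-\alpha\in(0,1)$ of $x^{(n)}$. So the whole problem is to expand a fractional integral of order $n-\alpha$ of the $C^{m+1}$ function $x^{(n)}$, and the plan is to follow the first-order arguments of \cite{almeida1,Atan2}, carrying the extra bookkeeping forced by the exponent $n-\alpha-1\in(-1,0)$ and by the $m+1$ correction terms.

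Next I would factor the kernel as $(t-\tau)^{n-\alpha-1}=(t-a)^{n-\alpha-1}\bigl(1-(\tau-a)/(t-a)\bigr)^{n-\alpha-1}$ and expand the last factor in the generalized binomial series recalled just before the statement, with $\gamma=n-\alpha-1$. Substituting and interchanging summation and integration produces a series whose $p$-th term is a constant multiple of $(t-a)^{n-\alpha-1-p}\int_a^t(\tau-a)^p x^{(n)}(\tau)\,d\tau$; writing the binomial coefficient as $\frac{\Gamma(p-n+\alpha+1)}{\Gamma(\alpha+1-n)\,p!}$ and combining it with the power of $(t-a)$ identifies, for $p=0,\dots,N-m-1$, exactly the terms $B_{p+m+1}(t-a)^{n-p-1-\alpha}V_p(t)$ of the statement.

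The remaining part of the series, $p\ge N-m$, is divergent and must be regularized before being truncated; this is done by integrating by parts $m+1$ times in each moment $\int_a^t(\tau-a)^p x^{(n)}(\tau)\,d\tau$, which is legitimate precisely because $x\in C^{n+m+1}$. Each integration peels off a boundary value $\pm\frac{p!}{(p+j+1)!}(t-a)^{p+j+1}x^{(n+j)}(t)$ ($j=0,\dots,m$) and gains a factor $1/p$, leaving a single moment of $x^{(n+m+1)}$ whose coefficient now decays like $p^{-(n+m-\alpha)-1}$. Summing the boundary contributions over $p$ and reindexing, the coefficient of $(t-a)^{n+j-\alpha}x^{(n+j)}(t)$ comes out as a multiple of a tail $\sum_{q\ge q_0}\frac{\Gamma(q+\alpha-n-j)}{\Gamma(\alpha-n-j)\,q!}$; since $n+j-\alpha>0$, the full series sums to $(1-1)^{n+j-\alpha}=0$, so this tail equals minus the corresponding finite partial sum, and after dividing by $\Gamma(n+j+1-\alpha)$ this is precisely $A_j$. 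Everything discarded — the moments of $x^{(n+m+1)}$ summed against the binomial tail beyond $N$ — is $E_N(t)$.

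The error estimate is the real work. Bounding $|x^{(n+m+1)}|$ on $[a,t]$ by its maximum and pulling out the factor $(t-a)^{n+m+1-\alpha}$, $|E_N(t)|$ is controlled by $\sum_{p>N}\bigl|\binom{n+m-\alpha}{p}\bigr|$, a tail of the generalized binomial coefficients of $(1-z)^{n+m-\alpha}$. I would then prove, or cite from \cite{almeida1,Atan2}, the inequality
$$\Bigl|\binom{n+m-\alpha}{p}\Bigr|\le\frac{\exp\bigl((n+m-\alpha)^2+(n+m-\alpha)\bigr)}{\Gamma(n+m+1-\alpha)\,p^{\,n+m+1-\alpha}},$$
which follows from writing the coefficient as a ratio of Gamma functions and bounding a product of the form $\prod_{j}(1+c/j)$ by $\exp\bigl(c\sum_j 1/j\bigr)$ together with $\sum_{j\le p}1/j\le 1+\ln p$; this is what generates the constant $\exp((n+m-\alpha)^2+(n+m-\alpha))$ and the factor $1/\Gamma(n+m+1-\alpha)$. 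Finally $\sum_{p>N}p^{-(n+m+1-\alpha)}\le\int_N^\infty s^{-(n+m+1-\alpha)}\,ds=\frac{N^{-(n+m-\alpha)}}{n+m-\alpha}$ delivers the claimed bound; the hypothesis $N\ge m+1$ is used only to keep all index ranges (in particular $0\le p\le N-m-1$ for the surviving moments $V_p$) nonempty. I expect the two genuine difficulties to be (a) making the term-by-term manipulations rigorous near $\tau=t$, where the kernel is singular and the binomial series diverges, so that the $m+1$ integrations by parts really must precede any truncation, and (b) establishing the Gamma-ratio inequality sharply enough to yield the clean closed-form constant.
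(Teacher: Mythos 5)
Your proposal is correct in substance but reverses the order of the paper's argument. The paper first integrates by parts $m+1$ times in $\frac{1}{\Gamma(n-\alpha)}\int_a^t(t-\tau)^{n-\alpha-1}x^{(n)}(\tau)\,d\tau$, so the kernel becomes the \emph{nonsingular} $(t-\tau)^{n+m-\alpha}$ acting on $x^{(n+m+1)}$; only then does it expand binomially, truncate at $N$, and integrate by parts back (with $u=(\tau-a)^k$, $v'=x^{(n+m+1)}$) to return to the moments $V_k$ of $x^{(n)}$ and the boundary terms $A_k(t-a)^{n+k-\alpha}x^{(n+k)}(t)$. That ordering makes the remainder appear at once as $E_N(t)=\frac{1}{\Gamma(n+m+1-\alpha)}\int_a^t\overline{E}_N(t,\tau)x^{(n+m+1)}(\tau)\,d\tau$ with $\overline{E}_N$ the tail of a convergent expansion of a bounded kernel, so no interchange of sum and integral near $\tau=t$ ever needs justification, and the factor $1/\Gamma(n+m+1-\alpha)$ in the error bound comes from this prefactor: the binomial estimate actually used is $\bigl|\binom{n+m-\alpha}{k}\bigr|\le\exp((n+m-\alpha)^2+n+m-\alpha)\,k^{-(n+m+1-\alpha)}$, without the Gamma factor you attach to it. You instead expand the singular kernel $(t-\tau)^{n-\alpha-1}$ first, read off the $B_k$-terms from $p\le N-m-1$ (your identification of those coefficients with the stated $B_k$ is exact), and regularize only the tail by parts, recovering $A_k$ from the vanishing of $\sum_{q\ge 0}\binom{n+k-\alpha}{q}(-1)^q$; after the reindexing $q=p+k+1$ this does reproduce the stated $A_k$, and the leftover moments of $x^{(n+m+1)}$ reassemble into the same $E_N$. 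What your route costs is exactly the two justifications the paper's ordering avoids: term-by-term integration of a series that diverges at $\tau=t$ (handled, e.g., by the positivity of the coefficients $\binom{n-\alpha-1}{p}(-1)^p$ plus dominated convergence), and evaluating the binomial series at $z=1$ (Abel summation). Two small slips to repair: the tail $p\ge N-m$ of your expansion is not divergent --- its terms are $O(p^{\alpha-n-1})$, hence summable --- it merely decays too slowly to yield the $N^{-(n+m-\alpha)}$ rate if truncated outright, which is the true reason the extra integrations by parts are needed; and the $1/\Gamma(n+m+1-\alpha)$ should be traced to the reassembled prefactor rather than built into the binomial-coefficient inequality.
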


\begin{proof} Starting with the formula
$$\LCD x(t) =\frac{1}{\Gamma(n-\alpha)}\int_a^t (t-\tau)^{n-\alpha-1}x^{(n)}(\tau) d\tau,$$
and integrating by parts with $u'=(t-\tau)^{n-\alpha-1}$ and $v=x^{(n)}(\tau)$, we get
$$\LCD x(t) =\frac{x^{(n)}(a)}{\Gamma(n+1-\alpha)}(t-a)^{n-\a}+\frac{1}{\Gamma(n+1-\alpha)}\int_a^t (t-\tau)^{n-\alpha}x^{(n+1)}(\tau) d\tau.$$
Repeating this process $m$ more times, obtain the formula
$$\LCD x(t) =\sum_{k=0}^m\frac{x^{(n+k)}(a)}{\Gamma(n+k+1-\alpha)}(t-a)^{n+k-\a}+\frac{1}{\Gamma(n+m+1-\alpha)}\int_a^t (t-\tau)^{n+m-\alpha}x^{(n+m+1)}(\tau) d\tau.$$
Using the Taylor's expansion, we obtain the next sum
$$\begin{array}{ll}
(t-\tau)^{n+m-\alpha}& =\DS (t-a)^{n+m-\alpha}\left(1-\frac{\t-a}{t-a}\right)^{n+m-\alpha}\\
& =\DS (t-a)^{n+m-\alpha}\sum_{k=0}^N \C (-1)^k\frac{(\t-a)^k}{(t-a)^k}+\overline E_N(t,\t),
\end{array}$$
where
$$\overline E_N(t,\t)=(t-a)^{n+m-\alpha}\sum_{k=N+1}^\infty \C (-1)^k\frac{(\t-a)^k}{(t-a)^k}.$$
Then,
$$\LCD x(t) =\sum_{k=0}^m\frac{x^{(n+k)}(a)}{\Gamma(n+k+1-\alpha)}(t-a)^{n+k-\a}$$
$$+\frac{(t-a)^{n+m-\a}}{\Gamma(n+m+1-\alpha)}\sum_{k=0}^N \frac{\Gamma(k+\a-n-m)}{\Gamma(\a-n-m)k!(t-a)^k}\int_a^t (\t-a)^k x^{(n+m+1)}(\tau) d\tau+E_{N}(t),$$
where
$$E_{N}(t)=\frac{1}{\Gamma(n+m+1-\alpha)}\int_a^t \overline E_{N}(t,\t) x^{(n+m+1)}(\tau) d\tau.$$
If we split the sum into $k=0$ and the remaining terms $k=1,\ldots, N$, and integrating by parts with $u=(\t-a)^k$ and $v'=x^{(n+m+1)}(\tau)$, we get
$$\LCD x(t) =\sum_{k=0}^{m-1}\frac{x^{(n+k)}(a)}{\Gamma(n+k+1-\alpha)}(t-a)^{n+k-\a}+A_m(t-a)^{n+m-\a} x^{(n+m)}(t)$$
$$+\frac{(t-a)^{n+m-1-\a}}{\Gamma(n+m-\alpha)}\sum_{k=1}^N \frac{\Gamma(k+\a-n-m)}{\Gamma(\a+1-n-m)(k-1)!(t-a)^{k-1}}\int_a^t (\t-a)^{k-1} x^{(n+m)}(\tau) d\tau+E_{N}(t).$$
Repeating the procedure $m$ more times, we obtain
$$\LCD x(t)=\sum_{k=0}^{m}A_k(t-a)^{n+k-\a} x^{(n+k)}(t)+\sum_{k=m+1}^N B_k(t-a)^{n+m-k-\a}V_{k-m-1}(t)+E_N(t).$$
Now we get the upper bound formula for the error $E_N(t)$.
Using the following relations:
$$\left|\frac{\t-a}{t-a}\right|\leq1, \quad \forall \t\in[a,t],$$
and
$$\sum_{k=N+1}^\infty \left|\C (-1)^k\right|\leq \sum_{k=N+1}^\infty \frac{\exp((n+m-\a)^2+n+m-\a)}{k^{n+m+1-\a}}$$
$$\leq\int_N^\infty \frac{\exp((n+m-\a)^2+n+m-\a)}{k^{n+m+1-\a}}\, dk= \frac{\exp((n+m-\a)^2+n+m-\a)}{N^{n+m-\a}(n+m-\a)},$$
the formula is deduced.
\end{proof}
We stress out that for all $t\in[a,b]$, $E_N(t)$ goes to zero as $N$ goes to $\infty$. Also, using Eq. \eqref{relation1}, a similar formula can be deduced for the left Riemann--Liouville fractional derivative. An approximation formula for the right Caputo fractional derivative is straightforward.

\begin{theorem} Let $m \in \mathbb N \cup\{0\}$, $\mathbb N \ni N \geq m+1$ and  $x:[a,b]\rightarrow\mathbb{R}$ be a function of class $C^{n+m+1}$. Define
\begin{equation*}
\begin{split}
A_k&=\DS\frac{(-1)^{n+k}}{\Gamma(n+k+1-\a)}\left[1+\sum_{p=m-k+1}^N\frac{\Gamma(p+\a-n-m)}{\Gamma(\a-n-k)(p-m+k)!}\right], \quad \mbox{for} \, k \in \{0,1,\ldots,m\},\\
B_k&=\DS\frac{(-1)^n\Gamma(k+\a-n-m)}{\Gamma(n-\a)\Gamma(\a+1-n)(k-m-1)!},\quad \mbox{for} \, k \in \{m+1,m+2,\ldots,N\},\\
W_k(t)& \DS=\int_t^b (b-\t)^{k}x^{(n)}(\t)d\t,\quad \mbox{for} \, k \in \{0,1,\ldots,N-m-1\},\, t\in[a,b].
\end{split}
\end{equation*}
Then, the following holds:
$$\RCD x(t)=\sum_{k=0}^{m}A_k(b-t)^{n+k-\a} x^{(n+k)}(t)+\sum_{k=m+1}^N B_k(b-t)^{n+m-k-\a}W_{k-m-1}(t)+E_N(t),$$
with
$$|E_{N}(t)|\leq \max_{\tau\in[t,b]}|x^{(n+m+1)}(\tau)| (b-t)^{n+m+1-\a}
\frac{\exp((n+m-\a)^2+n+m-\a)}{\Gamma(n+m+1-\a)N^{n+m-\a}(n+m-\a)}.$$
\end{theorem}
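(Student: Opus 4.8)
The plan is to avoid repeating the integration-by-parts computation of Theorem~\ref{teo1} and instead reduce the right-sided statement to the left-sided one by a reflection. First I would introduce the auxiliary function $\tilde{x}(s)=x(a+b-s)$ for $s\in[a,b]$, which is again of class $C^{n+m+1}$, and establish the identity $\RCD x(t)=\left({}^C_aD_s^{\a}\tilde{x}\right)(a+b-t)$. This is a direct substitution $\tau=a+b-\sigma$ in the defining integral of ${}^C_aD_s^{\a}\tilde{x}$, using $\tilde{x}^{(n)}(\sigma)=(-1)^n x^{(n)}(a+b-\sigma)$; one of the resulting sign factors is absorbed by the orientation reversal of the integral and the surviving $(-1)^n$ is exactly the one appearing in the definition of $\RCD$.

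Next I would apply Theorem~\ref{teo1} to $\tilde{x}$ at the point $s=a+b-t$ and translate every ingredient back to $x$. Here $s-a=b-t$, $\tilde{x}^{(n+k)}(s)=(-1)^{n+k}x^{(n+k)}(t)$, and, by the same change of variables $\sigma=a+b-\tau$, the quantities $V_j$ of Theorem~\ref{teo1} built from $\tilde{x}$ become $\int_a^s(\sigma-a)^j\tilde{x}^{(n)}(\sigma)\,d\sigma=(-1)^n\int_t^b(b-\tau)^j x^{(n)}(\tau)\,d\tau=(-1)^n W_j(t)$. Substituting these into the conclusion of Theorem~\ref{teo1}, the constant $A_k$ picks up the factor $(-1)^{n+k}$ and $B_k$ the factor $(-1)^n$, which is precisely the form stated above. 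For the remainder one notes $E_N(t)=\widetilde{E}_N(a+b-t)$ and uses the bound of Theorem~\ref{teo1} together with $(s-a)^{n+m+1-\a}=(b-t)^{n+m+1-\a}$ and $\max_{\sigma\in[a,a+b-t]}|\tilde{x}^{(n+m+1)}(\sigma)|=\max_{\tau\in[t,b]}|x^{(n+m+1)}(\tau)|$, which reproduces the displayed estimate verbatim.

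The step I expect to be most delicate is the sign bookkeeping in the reflection: one must check that the chain-rule factor $(-1)^n$ in $\tilde{x}^{(n)}$, the orientation reversals in the defining integral and in each $V_j\leftrightarrow W_j$, and the alternating factor $(-1)^{n+k}$ in $\tilde{x}^{(n+k)}$ combine to give exactly $A_k=(-1)^{n+k}A_k^{\mathrm{left}}$ and $B_k=(-1)^n B_k^{\mathrm{left}}$, and not some other pattern. As an alternative I could bypass the reflection and mimic the proof of Theorem~\ref{teo1} line by line: integrate the defining integral by parts $m+1$ times in $\tau$, expand $(\tau-t)^{n+m-\a}=(b-t)^{n+m-\a}\left(1-\frac{b-\tau}{b-t}\right)^{n+m-\a}$ by the real binomial series, split off the $k=0$ term, integrate by parts $m$ further times, and collect; the error estimate is unchanged because $\left|\frac{b-\tau}{b-t}\right|\le 1$ for $\tau\in[t,b]$. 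I would favour the reflection argument as the shorter and less error-prone route.
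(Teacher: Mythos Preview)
Your reflection argument is correct: the identity ${}^C_tD_b^{\alpha}x(t)=\left({}^C_aD_s^{\alpha}\tilde{x}\right)(a+b-t)$ with $\tilde{x}(s)=x(a+b-s)$ holds exactly as you describe, and the translations $s-a=b-t$, $\tilde{x}^{(n+k)}(s)=(-1)^{n+k}x^{(n+k)}(t)$, $V_j(s)=(-1)^nW_j(t)$ produce precisely the sign pattern $A_k=(-1)^{n+k}A_k^{\mathrm{left}}$, $B_k=(-1)^nB_k^{\mathrm{left}}$ stated in the theorem. The error bound also transfers verbatim, since $|\tilde{x}^{(n+m+1)}|=|x^{(n+m+1)}|$ after the change of variable.

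For comparison, the paper does not actually supply a proof of this theorem: it merely remarks, just before the statement, that ``an approximation formula for the right Caputo fractional derivative is straightforward,'' evidently meaning that one repeats the computation of Theorem~\ref{teo1} step by step (your alternative route). Your reflection approach is genuinely different from this implicit suggestion and has the advantage of reducing the right-sided result to the already-proved left-sided one in a single substitution, so that no integration-by-parts or binomial-expansion bookkeeping needs to be redone; the direct route, on the other hand, is self-contained and does not rely on Theorem~\ref{teo1}. Either is acceptable, and your preference for the reflection argument is reasonable.
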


\begin{remark} Other methods exist in the literature to solve higher-order fractional problems. The most common procedure is to discretize the fractional operator, while our method allows us to rewrite the fractional problem into an ordinary one, and after it we can choose any available technique known in the literature to solve it. For a reference on a numerical method to approximate the fractional derivative of higher-order, we mention \cite{Sousa} and is the following. Let $\alpha$ be such that $1<\alpha<2$ and $x:[a,b]\to\mathbb R$ be a function of class $C^2$. Consider the mesh points defined by
$$t_j = a + j\Delta t,~ j= 0, 1 . . . , N$$
where $\Delta t$ denotes the uniform space step. So, the approximation is the following:
\begin{equation}\label{sousa}_a^C D_t^{\alpha}x(t_j)\approx \frac{\Delta t^{-\alpha}}{\Gamma(3-\alpha)}
\sum_{k=0}^{j-1}d_{j,k}\left(x\left(t_{k+2}\right)-2x\left(t_{k+1}\right)+x\left(t_{k}\right)\right),\end{equation}
where
$$d_{j,k}=(j-k)^{2-\alpha}-(j-k-1)^{2-\alpha}.$$
\end{remark}

\section{Numerical examples}\label{Sec:examples}

We exemplify in this section the purposed formula. All the required computations are executed in Matlab, using a grid on time $x_1,\ldots,x_G$. The error that appears in such approximations is measured by the $L^2$ norm:
\begin{equation}
\label{error:L2}
E(x,y)=\left(\sum_{i=1}^{G}(x_i-y_i)^2\right)^{\dfrac12}.
\end{equation}
For simplicity, we will consider always $G=100$.

\begin{example}
We compare the Caputo fractional derivative of $x(t)=t^6$ and $y(t)=(1-t)^6$,  $t\in[0,1]$, with fractional orders $\a=1.5$ and $\beta=2.5$. The exact expressions are given by
$$\begin{array}{ll}
\DS{^C_0D_t^{1.5}}x(t)=\frac{6!}{\Gamma(5.5)}t^{4.5}, & \quad \DS{^C_0D_t^{2.5}}x(t)=\frac{6!}{\Gamma(4.5)}t^{3.5},\\
\DS{^C_tD_1^{1.5}}y(t)=\frac{6!}{\Gamma(5.5)}(1-t)^{4.5}, & \quad\DS{^C_tD_1^{2.5}}y(t)=\frac{6!}{\Gamma(4.5)}(1-t)^{3.5}.\\
\end{array}$$
For the numerical approximation given by Theorem \ref{teo1}, we consider two distinct cases. First, we fix $m=1$ and take $N\in\{10,15,25,50\}$ (Figure \ref{m=1}); then, we fix $N=50$ and take $m\in\{1,2,3\}$ (Figure \ref{N=50}).

As expected, as $N$ increases, we obtain a better approximation for the fractional derivative.
From Figure \ref{N=50} we can see that, even for a small value of $m$, we already obtain a good approximation for each function.

\begin{figure}[h!]
  \begin{center}
    \subfigure[$\DS{^C_0D_t^{1.5}}x(t)$]{\label{fig_1_1}\includegraphics[scale=0.5]{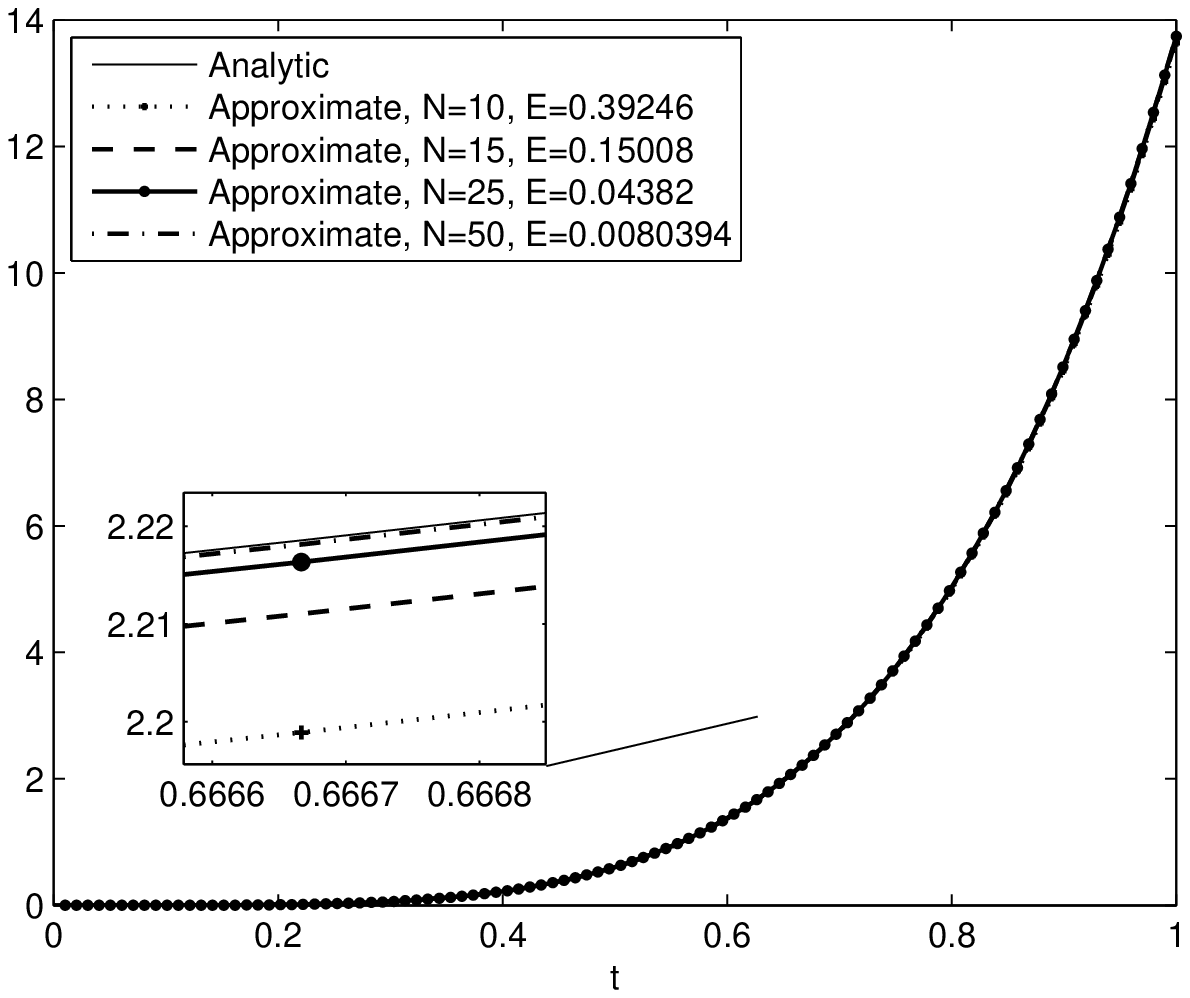}}
    \subfigure[$\DS{^C_0D_t^{2.5}}x(t)$]{\label{fig_1_2}\includegraphics[scale=0.5]{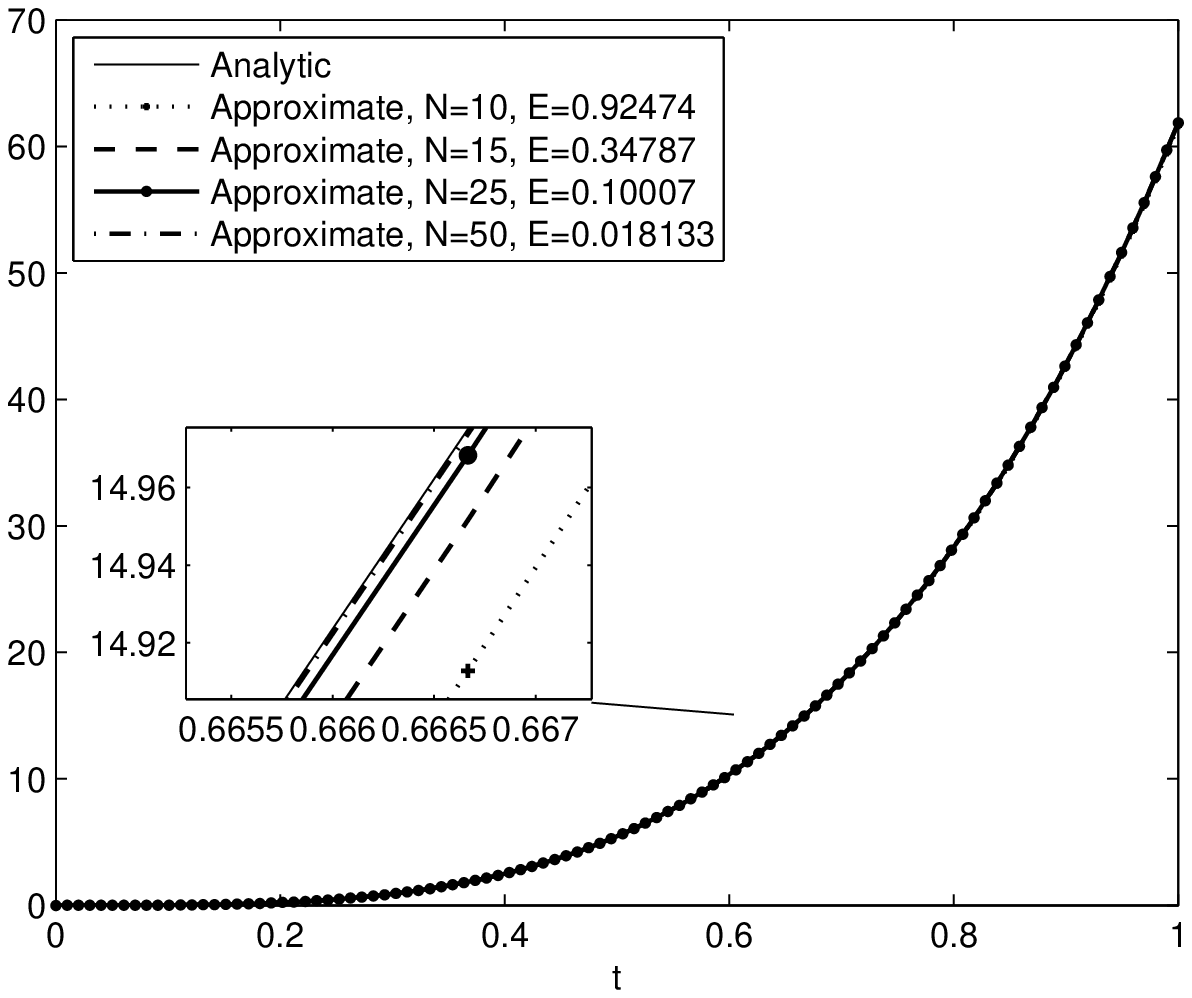}}
        \subfigure[$\DS{^C_tD_1^{1.5}}y(t)$]{\label{fig_1_3}\includegraphics[scale=0.5]{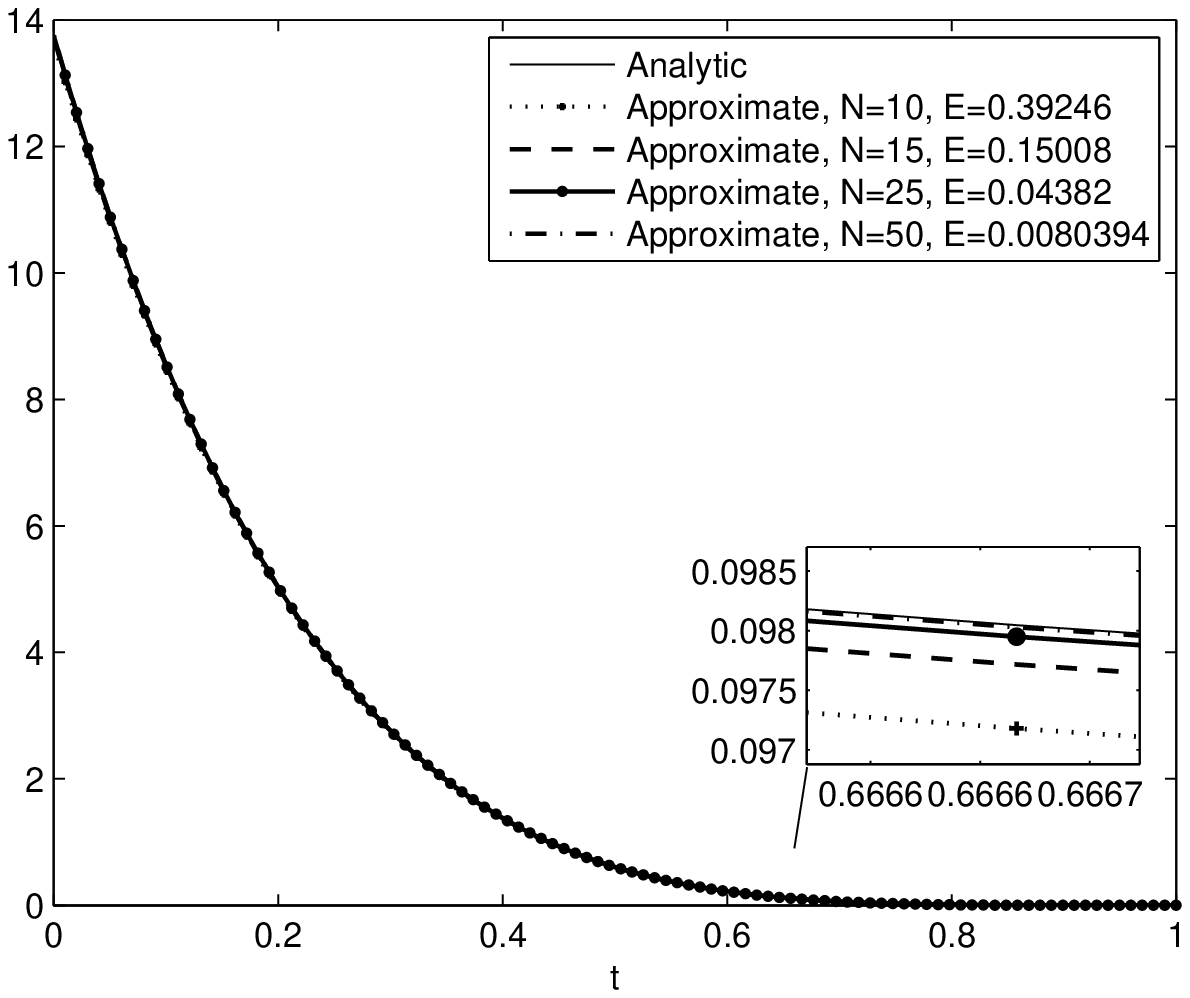}}
        \subfigure[$\DS{^C_tD_1^{2.5}}y(t)$]{\label{fig_1_4}\includegraphics[scale=0.5]{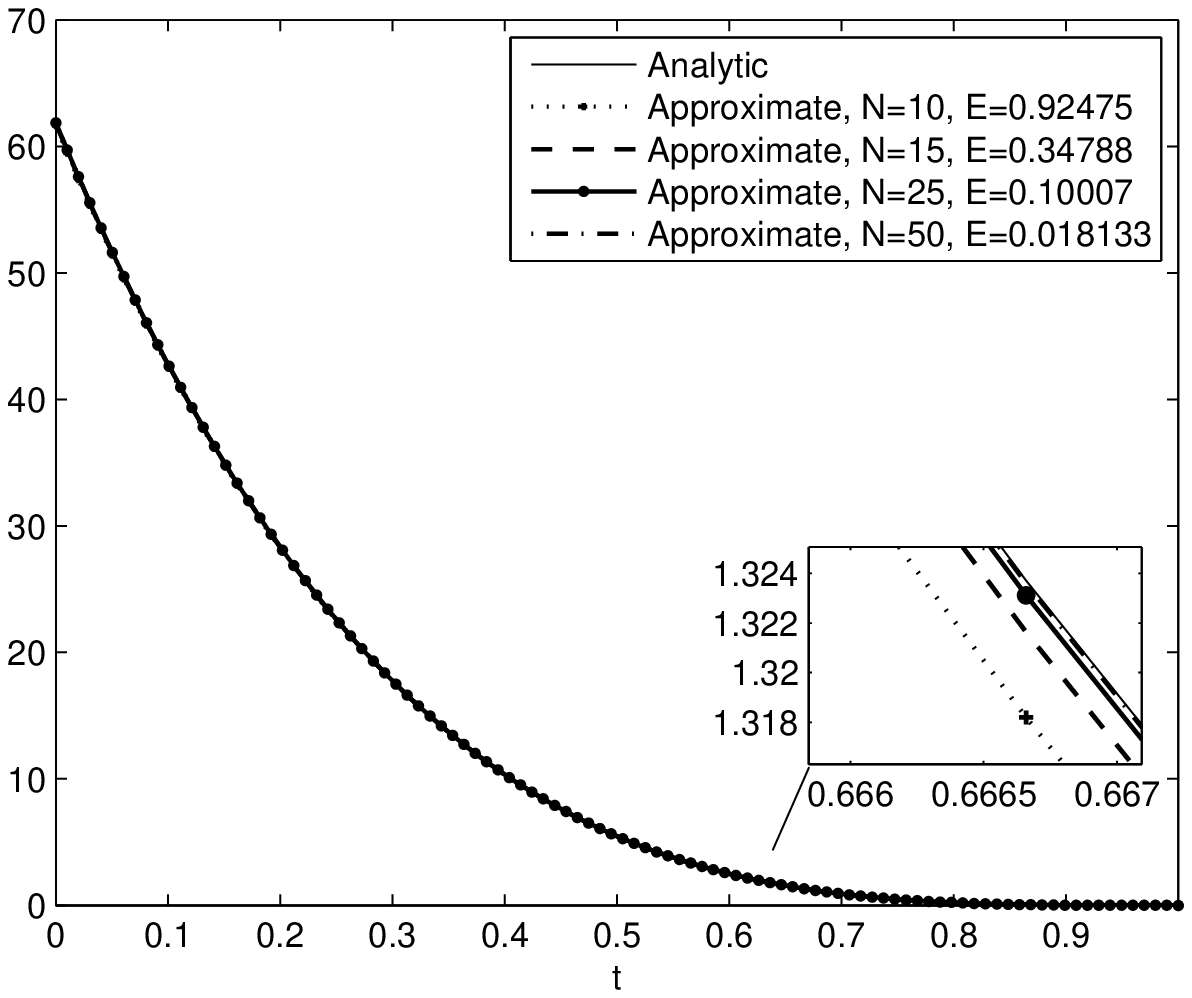}}
      \end{center}
  \caption{For $m=1$: analytic vs. numerical approximation.}\label{m=1}
  \label{m_1}
\end{figure}

\begin{figure}[h!]
  \begin{center}
    \subfigure[$\DS{^C_0D_t^{1.5}}x(t)$]{\label{fig_1_1_N_Fixed}\includegraphics[scale=0.5]{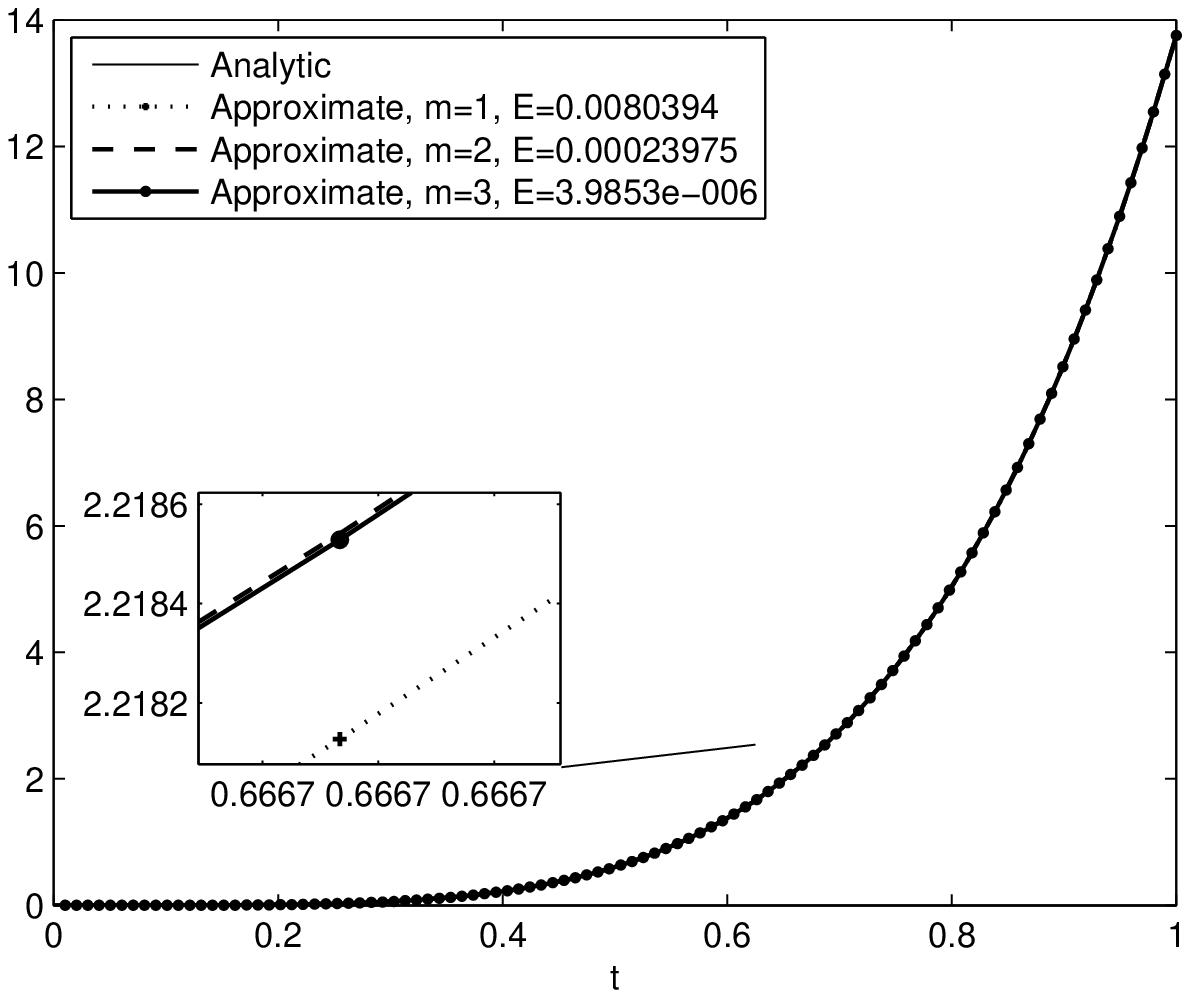}}
    \subfigure[$\DS{^C_0D_t^{2.5}}x(t)$]{\label{fig_1_2_N_Fixed}\includegraphics[scale=0.5]{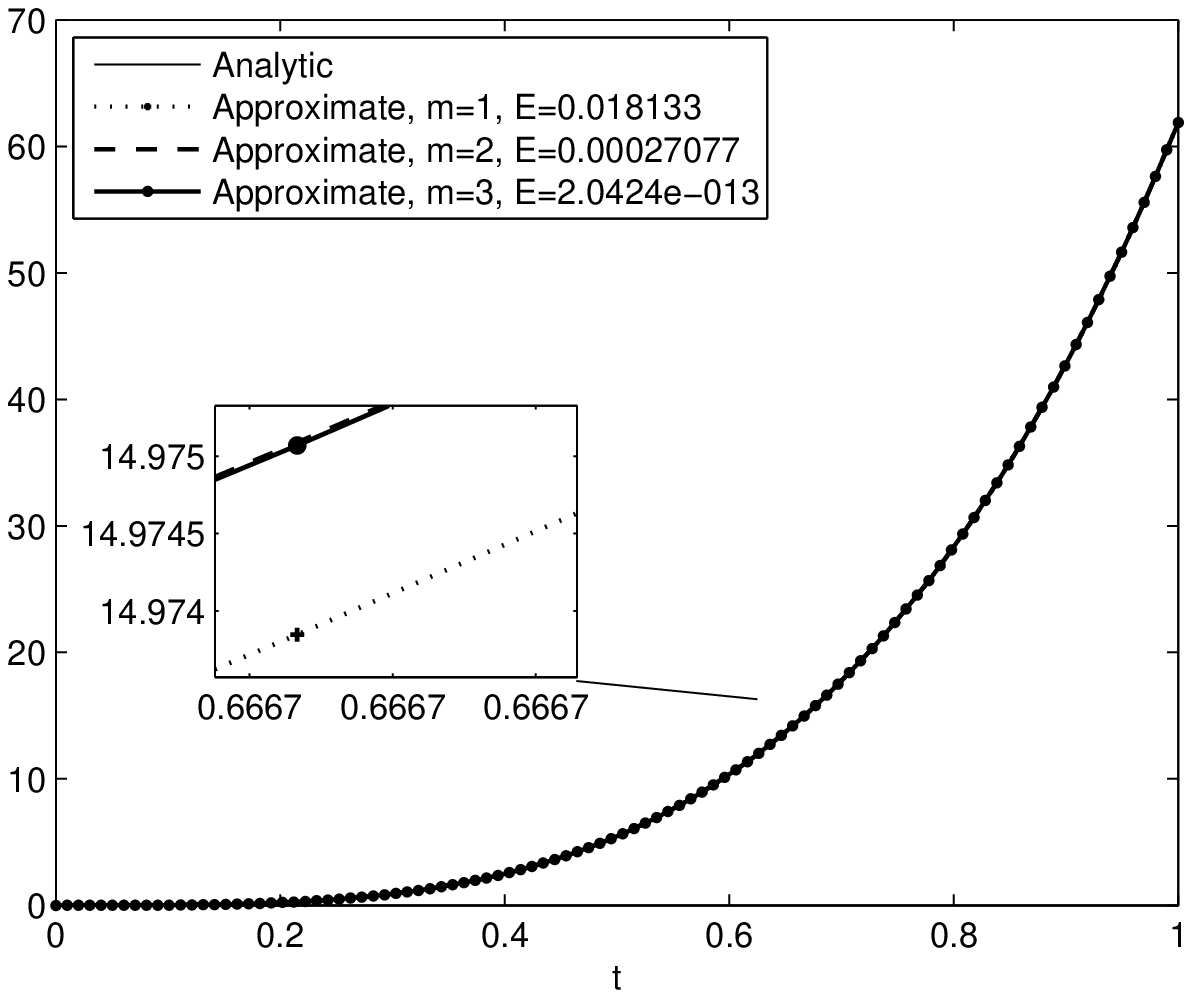}}
        \subfigure[$\DS{^C_tD_1^{1.5}}y(t)$]{\label{fig_1_3_N_Fixed}\includegraphics[scale=0.5]{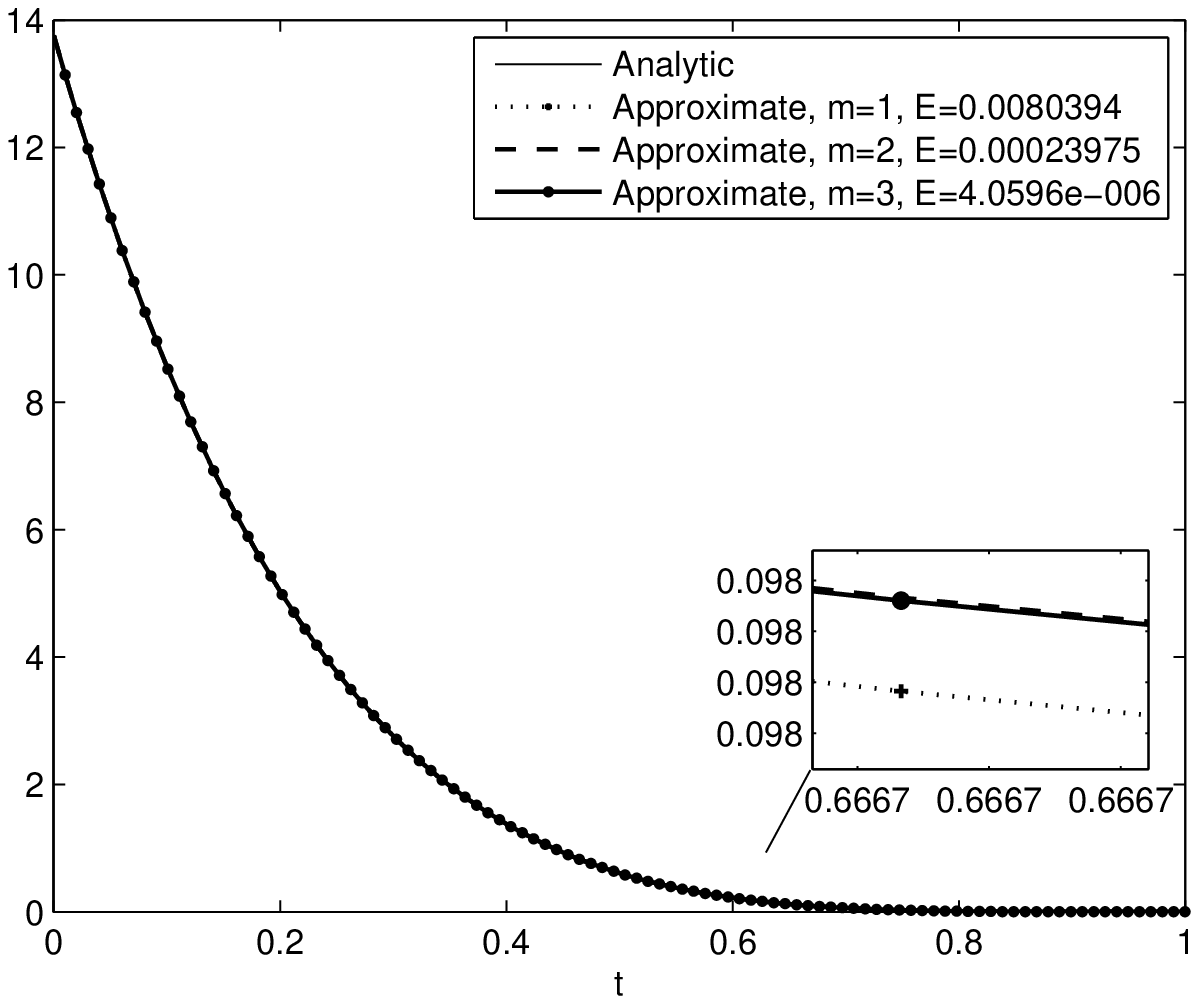}}
        \subfigure[$\DS{^C_tD_1^{2.5}}y(t)$]{\label{fig_1_4_N_Fixed}\includegraphics[scale=0.5]{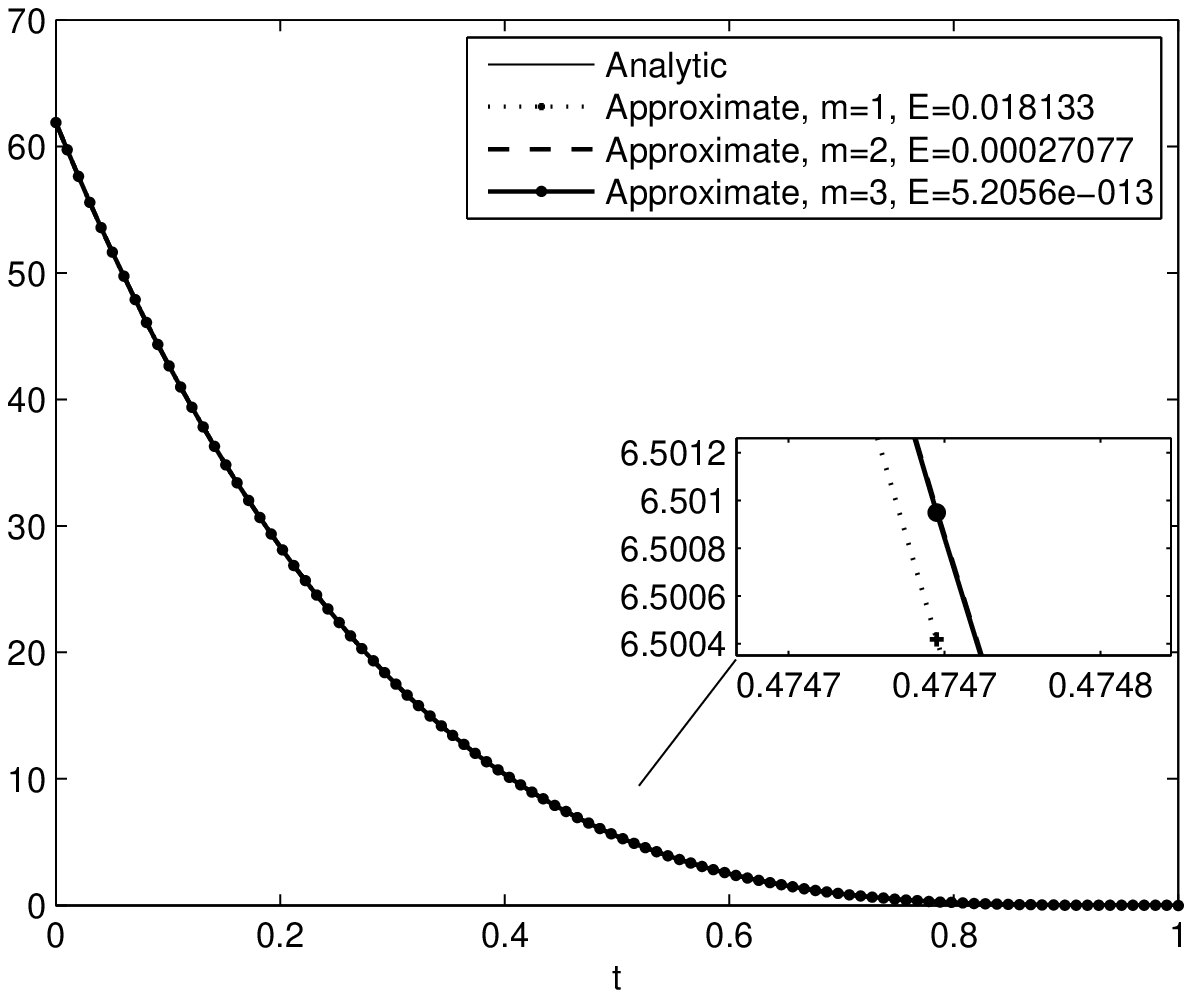}}
  \end{center}
  \caption{For $N=50$: analytic vs. numerical approximation.}\label{N=50}
  \label{N_50}
\end{figure}
\end{example}

\begin{remark} We now compare our approximation given by Theorem \ref{teo1} with the one given by equation \eqref{sousa}. Again, let $x(t)=t^6$ with $t\in[0,1]$, and for the order of the fractional derivative, we consider $\a=1.5$. The result obtained is shown in Figure \ref{fig_sousa} taking $\Delta t=1/100$.

\begin{figure}[h!]
\begin{center}
\includegraphics[scale=0.5]{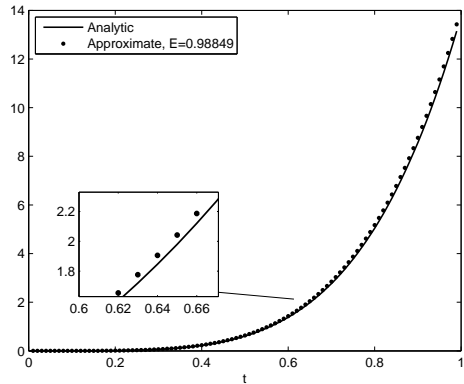}
\end{center}
\caption{For $\Delta t=1/100$: analytic vs. numerical approximation for $\DS{^C_0D_t^{1.5}}x(t)$.}\label{fig_sousa}
\end{figure}
\end{remark}

For our next two examples, we use the MatLab built-in ODE solver \it{ode45}.

\begin{example} Consider a fractional differential equation
$$\left\{
\begin{array}{l}
\DS{^C_0D_t^{2.5}}x(t)+x(t)=\frac{6!}{\Gamma(4.5)}t^{3.5}+t^6, \quad t\in[0,1],\\
x(0)=0,\\
x'(0)=0,\\
x''(0)=0.\\
 \end{array} \right.$$
The obvious solution is $x(t)=t^6$. The idea is to re-write this fractional problem as a system of ordinary differential equations depending only on integer-order derivatives, and after we can apply any numerical tool available to solve it. Since we have three initial conditions, we replace the fractional operator  ${^C_0D_t^{2.5}}x(t)$ by the expansion given in Theorem \ref{teo1}, taking $m=0$, i.e.,
$${^C_0D_t^{2.5}}x(t)\approx A_0t^{0.5} x^{(3)}(t)+\sum_{k=1}^N B_kt^{0.5-k}V_{k-1}(t)$$
with
\begin{equation*}
\begin{split}
A_0&=\DS\frac{1}{\Gamma(1.5)}\left[1+\sum_{p=1}^N\frac{\Gamma(p-0.5)}{\Gamma(-0.5)p!}\right],\\
B_k&=\DS\frac{\Gamma(k-0.5)}{\Gamma(0.5)\Gamma(-0.5)(k-1)!}, \quad k=1,\ldots, N,\\
V_k(t)& \DS=\int_0^t \t^{k}x^{(3)}(\t)d\t, \quad t\in[0,1], \, k=0,\ldots, N-1,
\end{split}
\end{equation*}
and we obtain
$$\left\{
\begin{array}{l}
\DS A_0t^{0.5} x^{(3)}(t)+\sum_{k=1}^N B_kt^{0.5-k}V_{k-1}(t)+x(t)=\frac{6!}{\Gamma(4.5)}t^{3.5}+t^6, \quad t\in[0,1],\\
V_k'(t)=t^{k}x^{(3)}(t), \quad k=0,\ldots, N-1, \quad t\in[0,1],\\
x(0)=0,\\
x'(0)=0,\\
x''(0)=0,\\
V_k(0)=0, \quad k=0,\ldots, N-1.
 \end{array} \right.$$

The result is shown in Figure \ref{FDE1}.
We can see that, as $N$ increases, our numerical approximation becomes closer to the exact solution.

 \begin{figure}[h!]
   \begin{center}
     \includegraphics[scale=0.5]{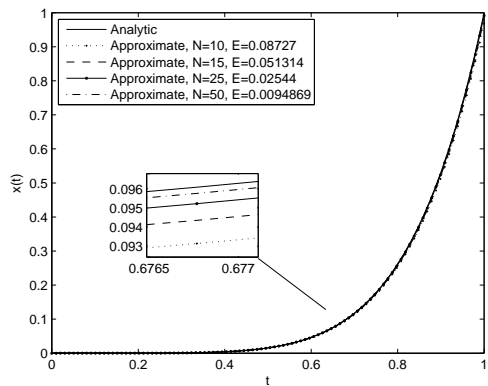}
       \end{center}
   \caption{Example 2: analytic vs. numerical approximation.}\label{FDE1}
\label{fig:ex:2}
 \end{figure}
\end{example}

\begin{example} Consider a fractional coupled mass-spring-damper system with mass value $m$ (in kg), modeled by a fractional differential equation of order $\a\in(1,2)$, for the displacement from the equilibrium position, $x(t)$,
$$m\,  {^C_0D_t^{\a}} x(t)+\gamma \, x'(t)+kx(t)=f(t),$$ with mass initial displacement $x(0)$ and initial velocity $x'(0)$ given, $\gamma$ is the damping coefficient (in N-s/m), $k$ the spring constant  (in N/m) and $f$ is an external force. For simplicity, we will consider $m=\gamma=k=1$, $x(0)=0$, $x'(0)=1$, $\a=1.9$ and $f(t)=\cos(t)$. Thus, the system becomes
\begin{equation}
\left\{
\begin{array}{l}
\DS {^C_0D_t^{1.9}} x(t)+ x'(t)+x(t)=\cos(t),\\
x(0)=0,\\
x'(0)=1.\\
 \end{array} \right. \label{Ex3:exact}
 \end{equation}
 The exact solution for this problem is not known. To solve it numerically, we replace ${^C_0D_t^{1.9}}x(t)$ by the expansion given in Theorem \ref{teo1}. If we consider $m=0$, we obtain the approximated system:
\begin{equation}
\left\{
\begin{array}{l}
\DS A_0t^{0.1} x''(t)+ x'(t)+x(t) +\sum_{k=1}^N B_kt^{0.1-k}V_{k-1}(t)  =\cos(t),\\
V_k'(t)=t^{k}x''(t), \quad k=0,\ldots, N-1,\\
x(0)=0,\\
x'(0)=1,\\
V_k(0)=0, \quad k=0,\ldots, N-1,
 \end{array} \right.  \label{Ex3:Approx:system}
 \end{equation}
with
 $$A_0=\DS\frac{1}{\Gamma(1.1)}\left[1+\sum_{p=1}^N\frac{\Gamma(p-0.1)}{\Gamma(-0.1)p!}\right]$$
 and
 $$B_k=\DS\frac{\Gamma(k-0.1)}{\Gamma(0.1)\Gamma(0.9)(k-1)!}.$$
In Figure \ref{fig:ex:2} we display the results for $t\in[0,20]$.

\begin{figure}[h!]
 \begin{center}
     \includegraphics[scale=0.5]{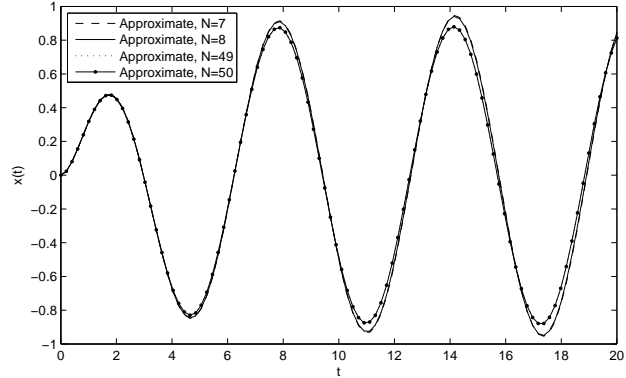}
       \end{center}
\caption{Example 3: analytic vs. numerical approximation.}\label{fig:ex:2}
\end{figure}

Let $g_N$ be the solution of system \eqref{Ex3:Approx:system}. Because the exact solution of system \eqref{Ex3:exact} is not known, we will use as a measure of accuracy, two obtained approximations  $g_{N-1}$ and $g_N$:
$$E(g_{N-1},g_{N})=\left(\sum_{i=1}^{G}(g_{N-1}(t_i)-g_{N}(t_i))^2\right)^{\dfrac12}.$$
 In Table~\ref{EX3:Error:Table} we show some calculated values for different values of $N$. We remark that as $N$ increases, the value decreases.
\begin{table}[!h]
\centering
\begin{tabular}{|c|c|}
\hline  N& $E(g_{N-1},g_N)$ \\
\hline  8&  0.054485696738145 \\
\hline  50&  0.001770846453709\\
\hline
\end{tabular}
\caption{Values for $E(g_{N-1},g_N)$.}\label{EX3:Error:Table}
\end{table}
\end{example}
\rm{
\section{Conclusion}

Fractional differential equations have proven to describe better certain dynamics of real world phenomena, and have called the attention of a vast community of researchers. The drawback is that it is very difficult to deal with them analytically, and so often numerical methods are used to solve the problems. We already find a large number of available methods when the order of the fractional derivative is in the interval $(0,1)$, but is not so common for higher-order derivatives. In this paper we present a general method that can be used to solve fractional differential equations of arbitrary order, by translating the problem into a classical one, depending only on integer-order derivatives. Here, after replacing the fractional operator by the purposed approximation, we solve the problems by discretizing the ordinary differential equations and getting a finite difference equations, and then solve using software Matlab.


\section*{Acknowledgments}

This work was supported by Portuguese funds through the CIDMA - Center for Research and Development in Mathematics and Applications,
and the Portuguese Foundation for Science and Technology (FCT-Funda\c{c}\~ao para a Ci\^encia e a Tecnologia), within project UID/MAT/04106/2013.
We would like to thank the anonymous referee for his/her helpful comments.


}
\end{document}